\def\NZQ{\mathbb}               
\def\NN{{\NZQ N}}
\def\QQ{{\NZQ Q}}
\def\ZZ{{\NZQ Z}}
\def\RR{{\NZQ R}}
\newtheorem{Theorem}{Theorem}[section]
\newtheorem{Lemma}[Theorem]{Lemma}
\newtheorem{Proposition}[Theorem]{Proposition}
\newtheorem{Definition}[Theorem]{Definition}
\let\epsilon\varepsilon
\let\phi=\varphi
\let\kappa=\varkappa
\def \d {\delta}
\begin{document}

\title{Defect and Local Uniformization}

\author{Steven Dale Cutkosky}
\thanks{Steven Dale Cutkosky was partially supported by NSF}

\address{Steven Dale Cutkosky, Department of Mathematics,
University of Missouri, Columbia, MO 65211, USA}
\email{cutkoskys@missouri.edu}

\author{Hussein Mourtada}
\address{Hussein Mourtada, Equipe G\'eom\'etrie et Dynamique, Institut Math\'ematique de Jussieu-Paris Rive Gauche, Universit\'e Paris 7, B\^atiment Sophie Germain, Case 7012, 75205, Paris Cedex 13, France}
\email{hussein.mourtada.imj-prg.fr}
\thanks{Hussein Mourtada was partially supported by a Miller Fellowship at the University of Missouri}

\dedicatory{Dedicated to Professor Felipe Cano on the occasion of his 60th birthday}

\begin{abstract} We give a simple algorithm showing that  the reduction of the multiplicity of a characteristic $p>0$ hypersurface singularity along a valuation is possible if there is a finite linear projection which is defectless. The method begins with the algorithm of Zariski to reduce multiplicity of hypersuface singularities in characteristic 0 along a valuation. This gives a simple demonstration that the only obstruction to local uniformization in positive characteristic is from defect arising in finite projections of singularities. 
\end{abstract}

\maketitle

\section{Introduction}  In this paper, we give a simple algorithm showing that  the reduction of the multiplicity of a characteristic $p>0$ hypersurface singularity along a valuation is possible if there is a finite linear projection which is defectless. The method begins with the algorithm of Zariski to reduce multiplicity of hypersuface singularities in characteristic 0 along a valuation. This gives a simple demonstration that the only obstruction to local uniformization in positive characteristic is from defect arising in finite projections of singularities.

\begin{Definition} Local uniformization holds in dimension $m$ (LU holds in dimension $m$) if for every algebraic function field $K$  over an algebraically closed  field $k$ of dimension $m$ 
and for every valuation $\nu$ of $K/k$, there exists an algebraic local ring $R$ of $K$ such that $R$ is regular and $\nu$ dominates $R$.
\end{Definition}

An algebraic local ring $R$ of a function field $K/k$ is a local domain which is essentially of finite type over $k$ with quotient field $K$. A valuation $\nu$ of an algebraic function field $K/k$
is required to be trivial on $k$. A valuation $\nu$ of $K$ dominates $R$ if $R\subset V_{\nu}$, where $V_{\nu}$ is the valuation ring of $\nu$, and $m_{\nu}\cap R=m_R$ where $m_{\nu}$ is the maximal ideal of $V_{\nu}$ and $m_R$ is the maximal ideal of $R$. We will denote the value group of $\nu$ by $\Phi_{\nu}$. Foundational results on valuations can be found in Chapter VI of \cite{ZS2}, \cite{RTM}, \cite{E} and \cite{R}.
 
 If $X$ is a (necessarily separated) variety with function field $k(X)=K$, then there is at most one point $p\in X$ such that $\nu$ dominates the local ring $\mathcal O_{X,p}$. We will say that $\nu$ dominates $p$, or that the center of $\nu$ on $X$ is $p$. If $X_1\rightarrow X$ is proper and birational and $\nu$ has a center on $X$, then $\nu$ has a center on $X_1$ (by the valuative criterion for properness).

Zariski, \cite{Z2},  found a clever patching argument  (which has been extended to positive characteristic and to other situations by Abhyankar \cite{RES} and Piltant \cite{P}) which proves that local uniformization in dimension $\le 3$ implies resolution of singularities in dimension $\le 3$. However,  there still is not a direct proof (even in characteristic zero) that a set of local uniformizations can be birationally modified  so that they patch together to form a global (proper) resolution of singularities, unless you start out with such a strong local version of resolution of singularities that  patching  becomes unnecessary.

Since $k$ is assumed to be algebraically closed, $K$ is a primitive extension of a rational function field, so we can assume initially in seeking to establish LU that there is a hyper surface singularity whose local ring has $K$ as its quotient field and is dominated by $\nu$.

Local uniformization has been proven in all dimensions over characteristic zero ground fields $k$ by Zariski \cite{Z1} and  local uniformization has been proven in dimension $\le 3$ 
over  ground fields $k$ of  characteristic $p>0$ by Abyankar \cite{RES} and Cossart  and Piltant \cite{CP1} and \cite{CP2}. A reasonably short proof of Abhyankar's result can be found in \cite{C1}.

The existence of resolutions of singularities implies local uniformization, so Hironaka's proof \cite{Hi1} of resolution of singularities in all dimensions over fields of characteristic zero implies local uniformization (in characteristic zero) in all dimensions.

All of the above proofs of local uniformization in dimension $m\ge 3$ (or resolution of singularities in dimension $m\ge 3$) require that embedded local uniformization (or embedded resolution of singularities) be true for hypersurfaces embedded in $\mbox{Spec}(A)$ where $A$ is a polynomial ring in $m$ variables over $k$ (or for hypersurfaces embedded in a  nonsingular variety of dimension $m$).  

Suppose $\nu$ is a valuation whose center is a nonsingular point $p$ on a variety $X$, and $f\in \mathcal O_{X,p}$ is the germ of a hypersurace on $X$ through $p$. Embedded local uniformization holds if there exists a birational morphism $X_1\rightarrow X$ such that $X_1$ is nonsingular at the center $p_1$ of $\nu$ and $f$ is a  unit times a monomial in (suitable) regular parameters in $\mathcal O_{X_1,p_1}$.

There are proofs of resolution of singularities (or local uniformization) after taking a suitable finite extension of $K$ (an alteration). Some of these proofs are by de Jong \cite{dJ} (giving resolution of singularities after a finite extension of  fields), Knaf and Kuhlmann \cite{KK},  Temkin \cite{Te} and  Gabber \cite{I} (giving local uniformization after a finite extension). 
A situation where LU is known in positive characteristic is for Abhyankar valuations, \cite{KK2} and \cite{T2}.

Suppose that $L\rightarrow K$ is a finite field extension and $\omega=\nu|L$. An important invariant of this extension of valued fields is the defect, $\delta(\nu/\omega)$ (Section \ref{Defect}).
If the defect $\delta(\nu/\omega)$ is zero, then the extension can be understood by knowledge of the quotient group $\Phi_{\nu}/\Phi_{\omega}$ and the field extension $V_{\nu}/m_{\nu}$ of $V_{\omega}/m_{\omega}$. The part of the field extension which comes from nontrivial defect is extremely complicated and not well understood. In characteristic zero, the defect is always zero, which is one explanation for why local uniformization is much simpler in characteristic zero.   

The fact that defect is an obstruction to local uniformization in positive characteristic was observed by Kuhlmann in \cite{Ku1}, and other papers such as \cite{Ku3}. The role of defect as an obstruction to local uniformization in characteristic $p>0$ is not readily visible in the proofs of  local uniformization or resolution in characteristic $p>0$ in dimension $\le 3$ (\cite{LU}, \cite{RES}, \cite{Li}, \cite{CJS}, \cite{CP1}, \cite{CP2}, \cite{C1}) and it does not appear in the approaches to resolution of singularities in higher dimension and positive characteristic in \cite{H1}, \cite{BeV} and \cite{BrV}.

In this paper, we show (as follows from Theorem \ref{Theorem4})  that if embedded local uniformization is true  within nonsingular varieties of the dimension of $K$, and a suitable linear projection of a hypersurface singularity with 
function field $K$  which is dominated by a given valuation $\nu$ can always be found  such that if $L$ is the function field of the linear projection and $\omega=\nu|L$ then the defect $\delta(\nu/\omega)=0$, then local uniformization holds in $K$.

A valuation $\nu$ of an algebraic function field $K/k$ is said to be zero dimensional if the transcendence degree of the residue field of the valuation ring of $\nu$ over $k$ is zero. 
This is equivalent to the statement that for every projective variety $X$ with algebraic function field $k$, if $p$ is  the center of $\nu$ on $X$, 
 then $\dim \mathcal O_{X,p}=\dim X$ ($p$ is a closed point of $X$). The essential case of local uniformization is for 0-dimensional valuations (\cite{Z1}, \cite{NS}). For simplicity, we will assume that this condition holds. Further, we will assume that we are in the essential case of a hypersurface singularity  in a polynomial ring. We now state some definitions, within the context which we have just established. 

An extension of domains $R\rightarrow S$ is said to be birational if $R$ and $S$ have the same quotient field.

\begin{Definition}\label{DefELU} Embedded local uniformization holds in dimension $m$ (ELU holds in dimension $m$)  if the following is true: Suppose that $A=k[x_1,\ldots,x_m]$ is a polynomial ring in $m$ variables over an algebraically closed  field $k$ and $\nu$ is a zero dimensional valuation of the quotient field $K$ of $A$ which dominates $A_{\mathfrak m}$ where $\mathfrak m=(x_1,\ldots,x_m)$. Then if $0\ne f\in A$, there exists a birational extension $A\rightarrow A_1$ where $A_1$ is a polynomial ring $A_1=k[x_1(1),\ldots,x_m(1)]$ such that $\nu$ dominates $(A_1)_{\mathfrak m_1}$ where $\mathfrak m_1=(x_1(1),\ldots,x_m(1))$,  there exists $n\le m$ such that $\{\nu(x_1(1)),\ldots,\nu(x_n(1))\}$ is a rational basis of $\Phi_{\nu}\otimes \QQ$ and $f=x_1(1)^{b_1}\cdots x_n(1)^{b_n}\overline f$ with $b_1,\ldots,b_n\in\NN$ and $\overline f\in A_1\setminus \mathfrak m_1$.
\end{Definition}

\begin{Definition} Local reduction of multiplicity holds in dimension $m$ (LRM holds in dimension $m$) if the following is true:
Suppose that $A=k[x_1,\ldots,x_m,x_{m+1}]$ is a polynomial ring in $m+1$ variables over an algebraically closed  field $k$, $f\in A$ is irreducible and  $K$ is the quotient field of 
$A/(f)$, with $f\in \mathfrak m=(x_1,\ldots,x_{m+1})$ and $r=\mbox{ord}(f(0,\ldots,0,x_{m+1}))$ satisfies $1<r<\infty$. Suppose that $\nu$ is a zero dimensional valuation of the quotient field of $A/(f)$ which dominates $(A/(f))_{\mathfrak m}$ . 
Then there exists a birational extension $A\rightarrow A_1$ where $A_1$ is a polynomial ring $A_1=k[x_1(1),\ldots,x_{m+1}(1)]$ such that $\nu$ dominates $(A_1/f_1)_{\mathfrak m_1}$ where $f_1$ is a strict transform of $f$ in $A_1$,
$\mathfrak m_1=(x_1(1),\ldots,x_{m+1}(1))$,   and $1\le r_1=\mbox{ord}(f_1(0,\ldots,0,x_{m+1}(1))<r$.
\end{Definition}

Now ELU in dimension $m$ immediately implies LU in dimension $m-1$ for hypersurfaces, which implies LU in dimension $m-1$ by the primitive element theorem.

Consider the following statements:

\begin{equation}\label{state1} \mbox{LRM in dimension $m$ implies ELU in dimension $m+1$.} 
\end{equation}

\begin{equation}\label{state2} \mbox{ELU in dimension $m$ implies LRM in dimension $m$.}
\end{equation}

If statements (\ref{state1}) and (\ref{state2}) are true, then we could immediately deduce that ELU in dimension $m$ implies ELU in dimension $m+1$, and we would then know that LU holds in all dimensions. 

All of the above cited proofs of local uniformization and resolution of singularities in characteristic zero involve proving the two statements (\ref{state1}) and (\ref{state2}).
The proofs of resolution in dimension three and positive characteristic cited above involve  tricks  to obtain a proof that ELU in dimension 3 and LRM in dimension 3 in the special case  $r=p=\mbox{char k}$ implies LU in dimension 3.  The problem is that we do not  know ELU in dimension 4, so we are unable to proceed to LU in dimension 4.

Now (\ref{state1}) is not so difficult. In fact, induction on $r$ in LRM in dimension $m$ almost gives ELU in dimension $m+1$. So the really hard thing that needs to be proven (to obtain LU)  is (\ref{state2}). Now there are methods, for instance in \cite{Z1} and \cite{NS}, to reduce LRM to the case of rank 1 valuations, so we see that the really essential problem is to prove (\ref{state2}) for rank 1 valuations (the value group $\Phi_{\nu}$ of $\nu$ is order isomorphic to a subgroup of $\RR$).

Zariski's original  characteristic zero proof of (\ref{state2}) for rank 1 valuations from \cite{Z1}  is summarized after Theorem \ref{Theorem3}. The key statement  that fails in positive characteristic  is the validity of equation  (\ref{eq10}) in our summary of the characteristic zero proof, as we have that in characteristic $p>0$,
$$
\binom{r}{r-1}= 0\mbox{ if }p\mbox{ divides }r.
$$
In other words, the problem is the failure of the binomial theorem in positive characteristic, or put more positively, the fact that the Frobenius map is a homomorphism in positive characteristic.

We show in this paper, how starting with Zariski's analysis,  a simple proof can be given of (\ref{state2}) in characteristic $p>0$ for a valuation $\nu$, if the field extension $k(x_1,\ldots,x_m)\rightarrow K$ where $K$ is the quotient field of $k[x_1,\ldots,x_{m+1}]/(f)$ is without defect with respect to $\nu$ and the restriction of $\nu$ to $k(x_1,\ldots,x_m)$. The final statement of this is given in Theorem \ref{Theorem4}.

\section{Asymptotic properties of finite extensions}

Suppose that $K\rightarrow K^*$ is a finite field extension and $R$ is a local domain with maximal ideal $m_R$ and quotient field $K$ and $S$ is a local domain with maximal ideal $m_S$ and quotient field $K^*$. We say that $S$ dominates $R$ if $R\subset S$ and $m_S\cap R=m_R$. We say that $S$ lies over $R$ if $S$ dominates $R$ and $S$ is a localization of the integral closure of $R$ in $K^*$.

An extension of local rings $R\subset R_1$ is said to be a birational extension if $R_1$ dominates $R$, $R_1$ is essentially of finite type over $R$ and $R$ and $R_1$ have the same quotient field.

Suppose that $R$ and $S$ are  normal local rings such that $R$ is excellent, $S$ lies over  $R$, $\nu^*$ is a valuation of the quotient field $K^*$ of $S$ which dominates $S$, and $\nu$ is the restriction of $\nu^*$ to the quotient field $K$ of $R$. Suppose that $K^*$ is finite separable over $K$.  

We let $V_{\nu}$ be the valuation ring of $\nu$, with maximal ideal $m_{\nu}$  and $\Phi_{\nu}$ be the value group of $\nu$. We write 
$e(\nu^*/\nu)=[\Phi_{\nu^*}:\Phi_{\nu}]$ and $f(\nu^*/\nu)=[V_{\nu^*}/m_{\nu^*}:V_{\nu}/m_{\nu}]$.

\begin{Lemma}\label{Lemma4*} (Lemma 5.1 \cite{C3}) Suppose that $S_0$ is a local ring which is a birational extension of $S$ and is dominated by $\nu^*$. Then there exists a normal local ring $R'$ which is a birational extension of $R$ and is dominated by $\nu$, which has the property that if $R''$ is a normal local ring which is a birational extension of $R'$ and is dominated by $\nu$, and if $S''$ is the normal local ring of $K^*$ which lies over $R''$ and is dominated by $\nu^*$, then $S''$ dominates $S_0$.
\end{Lemma}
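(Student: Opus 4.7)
The plan is to reduce to a single generator and then exploit the finiteness of the integral closure $T$ of $R$ in $K^*$ over the excellent ring $R$. Since $S$ lies over $R$ we have $S = T_{\mathfrak q}$ with $\mathfrak q = m_{\nu^*} \cap T$, and since $S_0$ is essentially of finite type over $S$ we may write $S_0 = S[g_1, \ldots, g_n]_{\mathfrak p}$ with $\mathfrak p = m_{\nu^*} \cap S[g_1, \ldots, g_n]$ and each $g_i \in K^*$ of non-negative $\nu^*$-value. Iterating the construction one generator at a time reduces us to $S_0 = S[g]_{\mathfrak p}$; clearing $T$-denominators then lets us write $g = \alpha/\beta$ with $\alpha, \beta \in T$ and $\nu^*(\alpha) \ge \nu^*(\beta) \ge 0$.

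For any normal local birational extension $R''$ of $R$ dominated by $\nu$, the integral closure $T''$ of $R''$ in $K^*$ contains $T$ (as $T$ is integral over $R \subseteq R''$), and the ring $S''$ of the lemma is precisely $T''_{\mathfrak q''}$ with $\mathfrak q'' = m_{\nu^*} \cap T''$. Since $S''$ is dominated by $\nu^*$, its units are exactly the elements of $T''$ of $\nu^*$-value zero, so $S''$ dominates $S_0$ if and only if $g \in S''$; equivalently, there is $v \in T''$ with $\nu^*(v) = 0$ and $\alpha v/\beta \in T''$. Constructing $R'$ therefore amounts to arranging, uniformly for every such $R''$, the existence of such a $v$. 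One achieves this by adjoining to $R$ finitely many elements of $K$ (built from norms and traces of $\alpha$, $\beta$ and their conjugates, together with the coefficients of a finite presentation of $T$ as an $R$-module) and taking $R'$ to be the normalization of the resulting finitely generated $R$-subalgebra of $V_\nu$, localized at the center of $\nu$.

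The main obstacle will be that $g \in V_{\nu^*}$ does \emph{not} in general force $g$ to be integral over $V_\nu$: the conjugates of $g$ under the other extensions of $\nu$ to $K^*$ can have negative value, so the minimal polynomial of $g$ may have coefficients outside $V_\nu$, and adjoining those coefficients alone does not suffice. The remedy is to blow up $R$ enough to separate the primes of $T$ lying over $m_R$, so that only the one prime $\mathfrak q$ corresponding to $\nu^*$ remains relevant after passing to $R'$; that this separation can be accomplished in finitely many steps, and that the resulting $R'$ is normal, local, essentially of finite type over $R$, and dominated by $\nu$, follows from the finiteness of $T$ over the excellent ring $R$.
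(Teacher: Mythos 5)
Your reduction and reformulation are sound, and they match the natural first steps: it suffices to treat finitely many generators, to write a generator as $g=\alpha/\beta$ with $\alpha,\beta\in T$ and $\nu^*(\alpha)\ge\nu^*(\beta)\ge 0$, and to show that for all sufficiently large normal local birational extensions $R''$ of $R$ along $\nu$ one has $g\in S''=T''_{\mathfrak q''}$, equivalently that there exists $v\in T''$ with $\nu^*(v)=0$ and $v\alpha/\beta\in T''$; the inclusion $S'\subseteq S''$ whenever $R'\subseteq R''$ then makes the conclusion uniform. (Note that the paper itself gives no proof here; it quotes Lemma 5.1 of \cite{C3}, so you are being compared with that argument.) The gap is that the core step --- actually producing such a $v$, uniformly in $R''$ --- is never carried out. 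The recipe of adjoining ``norms and traces of $\alpha$, $\beta$ and their conjugates, together with the coefficients of a presentation of $T$'' is inert: norms and traces of elements of $T$ already lie in the normal ring $R$, the conjugates of $\alpha,\beta$ live in a Galois closure and cannot be adjoined to a subring of $K$, and no property of the resulting $R'$ is verified. The proposed remedy for the obstacle you correctly identify is also not available as stated: if $\nu$ has $s\ge 2$ inequivalent extensions $\mu_1=\nu^*,\mu_2,\dots,\mu_s$ to $K^*$, then for every sufficiently large $R''$ the ring $T''$ has at least $s$ distinct maximal ideals over $m_{R''}$ (the centers of the $\mu_i$), so one can never ``separate the primes so that only the one prime corresponding to $\nu^*$ remains''; and even after the centers are separated, arranging that $v\alpha/\beta$ has nonnegative value at every $\mu_i$ only places it in the integral closure of $V_\nu$ in $K^*$, which is strictly larger than $T''$ in general, so separation alone does not give $v\alpha/\beta\in T''$.

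What is missing is a limit or approximation argument. For example: every finite subset of $V_\nu$ lies in some normal local birational extension of $R$ dominated by $\nu$, so the directed union of the rings $T''$ is $W$, the integral closure of $V_\nu$ in $K^*$; $W$ is a semilocal Pr\"ufer domain whose maximal ideals are the centers of the finitely many extensions $\mu_i$ and whose localization at the center of $\nu^*$ is $V_{\nu^*}$. Hence $g=w/u$ with $w,u\in W$ and $\nu^*(u)=0$; since $w,u$ are integral over $V_\nu$ and $V_\nu$ is normal, the coefficients of their monic minimal polynomials lie in $V_\nu\cap K$, and adjoining these finitely many elements to $R$, normalizing, and localizing at the center of $\nu$ produces an $R'$ such that $w,u\in T''$, hence $g=w/u\in S''$, for every normal local birational extension $R''$ of $R'$ along $\nu$. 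Some such use of the structure of $W$ (or an equivalent approximation statement for the finitely many inequivalent extensions of $\nu$, combined with the integrality-over-$V_\nu$ step that tells you which elements of $K$ to adjoin) is exactly what your sketch lacks; without it, the assertion that a suitable $v\in T''$ exists is precisely the statement to be proved, not a consequence of excellence or of the finiteness of $T$ over $R$.
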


If $K^*$ is a Galois extension of $K$ with Galois group $G(K^*/K)$ and $A$ is a normal local ring whose quotient field is $K$ and  is dominated by $\nu$ and $B$ is the normal  local ring with quotient field $K^*$ which lies over $A$ and is dominated by $\nu^*$, then the splitting group of $B$ over $A$ is defined as
$$
G^s(B/A)=\{\sigma\in G(K^*/K)\mid \sigma(B)=B\}.
$$
The splitting field of $K^*$ over $K$ is the fixed field 
$$
K^s=(K^*)^{G^s(V_{\nu^*}/V_{\nu})}.
$$

The following lemma follows from Lemma 3.5 \cite{C3}.

\begin{Lemma}\label{Lemma8*} Suppose that $K^*$ is Galois over $K$. Then there exists a birational extension $R'$ of $R$, where $R'$ is a normal local ring which is dominated by $\nu$, such that
if $R''$ is a normal local ring which is a birational extension of $R'$ which is dominated by $\nu$, then 
$$
G^s(C''/R'')=G^s(V_{\nu^*}/V_{\nu})
$$
 where $C''$ is the normal local ring with quotient field $K^*$  which lies over  $R''$ and is dominated by $\nu^*$.
\end{Lemma}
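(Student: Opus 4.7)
The plan is to view $G^s(C'/R')$ as a decreasing net of subgroups of the finite Galois group $G(K^*/K)$, indexed by normal birational extensions $R'$ of $R$ dominated by $\nu$ (with $C'$ the associated normal local ring of $K^*$ lying over $R'$ and dominated by $\nu^*$), and to show that this net stabilizes precisely at the decomposition group $G^s(V_{\nu^*}/V_{\nu})$.

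First I would record two easy monotonicity facts. If $\sigma \in G^s(V_{\nu^*}/V_{\nu})$, then $\sigma(m_{\nu^*}) = m_{\nu^*}$, so $\sigma$ fixes the prime $m_{\nu^*} \cap D'$ of the integral closure $D'$ of $R'$ in $K^*$, and hence $\sigma(C') = C'$. This gives $G^s(V_{\nu^*}/V_{\nu}) \subseteq G^s(C'/R')$ for every $R'$. Second, whenever $R' \subseteq R''$ is a further normal birational extension dominated by $\nu$, the distinguished prime of $D''$ contracts to the distinguished prime of $D'$, which shows $G^s(C''/R'') \subseteq G^s(C'/R')$. Since $G(K^*/K)$ has only finitely many subgroups and any two such $R', R''$ admit a common birational upper bound (formed by normalizing the compositum inside $K$ and localizing at the center of $\nu$), there exists a single $R_0$ with $G^s(C''/R'') = H := \bigcap_{R'} G^s(C'/R')$ for every $R'' \supseteq R_0$.

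It remains to show $H \subseteq G^s(V_{\nu^*}/V_{\nu})$. Take $\sigma \in H$ and suppose for contradiction that $\sigma(V_{\nu^*}) \neq V_{\nu^*}$. Two distinct Galois conjugates of a valuation ring of $K^*$ are incomparable, so there exists $\beta \in V_{\nu^*}$ with $\sigma(\beta) \notin V_{\nu^*}$. Form the local ring $S_0 := S[\beta]_{\mathfrak{q}}$ with $\mathfrak{q} = m_{\nu^*} \cap S[\beta]$; this is a birational extension of $S$ dominated by $\nu^*$ and containing $\beta$. Lemma \ref{Lemma4*} then provides a normal birational extension $R_1$ of $R$ dominated by $\nu$ such that every further extension $R''$ of $R_1$ produces a $C''$ dominating $S_0$, hence containing $\beta$. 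Replacing $R_1$ by a common birational upper bound with $R_0$, we may assume $R_1 \supseteq R_0$. For $R'' = R_1$ we then have $\beta \in C''$ and $\sigma \in H = G^s(C''/R'')$, so $\sigma(\beta) \in \sigma(C'') = C'' \subseteq V_{\nu^*}$, contradicting the choice of $\beta$.

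I expect the main conceptual obstacle to be the stabilization argument for the directed system of birational extensions (which is not totally ordered), handled by combining finiteness of the subgroup lattice of $G(K^*/K)$ with the existence of common upper bounds inside $V_\nu$. The other delicate ingredient is the appeal to Lemma \ref{Lemma4*}, which ``locks in'' the chosen element $\beta$ inside every sufficiently large $C''$; without this lemma one would only find $\beta$ in some particular $C'$, with no a priori control over the splitting group at that $R'$.
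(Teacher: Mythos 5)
Your argument is correct, but it is worth noting that the paper does not actually prove this lemma: it simply quotes it as a consequence of Lemma 3.5 of \cite{C3}, so any genuine proof here is a different route. What you do is derive the statement inside the paper itself from Lemma \ref{Lemma4*} (itself quoted from Lemma 5.1 of \cite{C3}) together with three standard facts: $\sigma(C')=C'$ whenever $\sigma(V_{\nu^*})=V_{\nu^*}$ (since $\sigma$ preserves the integral closure $D'$ of $R'$ in $K^*$ and the prime $m_{\nu^*}\cap D'$), anti-monotonicity of $R'\mapsto G^s(C'/R')$ along the directed system of normal birational extensions dominated by $\nu$ (directedness uses that common upper bounds exist, via localizing the compositum at the center of $\nu$ and normalizing, which stays essentially of finite type because $R$ is excellent), and incomparability of distinct extensions of $V_{\nu}$ to the algebraic extension $K^*$, which produces the element $\beta\in V_{\nu^*}$ with $\sigma(\beta)\notin V_{\nu^*}$; Lemma \ref{Lemma4*} then ``locks'' $\beta$ into every sufficiently large $C''$ and gives the contradiction, so the stable group $H$ equals $G^s(V_{\nu^*}/V_{\nu})$. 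All the steps check out (including the stabilization via finiteness of the subgroup lattice of $G(K^*/K)$, and the fact that the ring lying over $R''$ and dominated by $\nu^*$ is unique, so $G^s(C''/R'')$ is well defined). What your approach buys is a self-contained proof within the paper's own toolkit rather than an external citation; what it costs is a dependence of Lemma \ref{Lemma8*} on Lemma \ref{Lemma4*}, so if one wanted to use your argument as a replacement for the reference to \cite{C3} one should check that Lemma 5.1 of \cite{C3} is not itself proved via its Lemma 3.5 (within the logical structure of the present paper, where Lemma \ref{Lemma4*} is taken as given, there is no circularity). Citing explicitly the incomparability of distinct extensions of a valuation to an algebraic extension (e.g.\ Chapter VI of \cite{ZS2}) would make the step producing $\beta$ airtight.
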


Suppose that $A$ is a subring of $V_{\nu}$. Then the center of $\nu$ on $A$ is the ideal $A\cap m_{\nu}$. An inclusion $A\rightarrow B$ of domains is said to be a birational extension if $A$ and $B$ have the same quotient field and $B$ is essentially of finite type over $A$.

\section{Defect}\label{Defect}
In this section we define the defect of an extension of valued fields, and discuss some basic properties. More details can be found in \cite{ZS2}, \cite{E}, \cite{Ku1} and \cite{R}.

Let $K$ be a field and $\nu$ be a valuation of $K$. Let $K^*$ be a finite Galois extension of $K$ and $\nu^*$ be an extension of $\nu$ to $K^*$. 
Then the defect $\delta(\nu^*/\nu)$ is the natural number defined by the Corollary on page 78 \cite{ZS2} and the discussion on page 58 \cite{ZS2} as
$$
|G^s(V_{\nu^*}/V_{\nu})|=e(\nu^*/\nu)f(\nu^*/\nu)p^{\delta(\nu^*/\nu)}
$$
where
$$
p=\left\{\begin{array}{ll}
\mbox{char }V_{\nu}/m_{\nu}&\mbox{ if }\mbox{char }V_{\nu}/m_{\nu}>0\\
1&\mbox{ if }\mbox{char }V_{\nu}/m_{\nu}=0
\end{array}\right.
$$
and $\delta(\nu^*/\nu)=0$ if $\mbox{char }V_{\nu}/m_{\nu}=0$. 

The definition of the defect $\delta(\nu^*/\nu)$ for an arbitrary finite and separable extension $K^*$ of $K$ is obtained by taking a Galois closure $L$ of $K^*$ over $K$, choosing an extension $\tilde\nu$ of $\nu^*$ to $L$, and defining (formula (26) \cite{CP})
$$
\delta(\nu^*/\nu)=\delta(\tilde\nu/\nu)-\delta(\tilde\nu/\nu^*)\ge 0.
$$
From multiplicativity of the ramification index, residue degree and degree of field extensions, we have that the formula is well defined, and (formula before Definition 7.1 on page 36 of \cite{CP})
\begin{equation}\label{Def}
[(K^*)^s:K^s]=e(\nu^*/\nu)f(\nu^*/\nu)p^{\delta(\nu^*/\nu)}.
\end{equation}
Let $\nu^s$ be the restriction of $\tilde \nu$ to $K^s$ and $(\nu^*)^s$ be the restriction of $\tilde\nu$ to $(K^*)^s$.
Since $e(\tilde \nu/\nu^s)=e(\tilde\nu/\nu)$ and $f(\tilde\nu/\nu^s)=f(\tilde\nu/\nu)$ (Theorem 23, page 71 \cite{ZS2} and Theorem 22, page 70 \cite{ZS2}), we have that $\delta(\tilde\nu/\nu^s)=\delta(\tilde\nu/\nu)$, and 
\begin{equation}\label{eqspdefect}
\begin{array}{lll}
\delta((\nu^*)^s/\nu^s)&=& \delta(\tilde\nu/(\nu^*)^s)-\delta(\tilde\nu/\nu^s)\\
&=& \delta(\tilde\nu/\nu^*)-\delta(\tilde\nu/\nu)\\
&=&\delta(\nu^*/\nu).
\end{array}
\end{equation}

If $K^*$ is a finite extension of $K$ and $\nu^*$ is the unique extension of $\nu$ to $K^*$, then we have 
Ostrowski's lemma (Theorem 2, page 236 \cite{R}), 
\begin{equation}\label{Ost}
[K^*:K]=e(\nu^*/\nu)f(\nu^*/\nu)p^{\delta(\nu^*/\nu)},
\end{equation}
allowing us to  calculate the defect $\delta(\nu^*/\nu)$ of an arbitrary finite extension $K\rightarrow K^*$.

\section{Perron Transforms} Suppose that $K^*$ is a field with a valuation $\nu^*$ and there is a field $k$ which is contained in $V_{\nu^*}$ such that $V_{\nu^*}/m_{\nu^*}=k$ and $\nu^*$ has rank 1.

Suppose that we have a polynomial ring $k[x_1,\ldots,x_m]$ and an irreducible element $f\in k[x_1,\ldots,x_m]$ such that $K^*$ is the quotient field of $k[x_1,\ldots,x_m]/(f)$, and the center of $\nu^*$ on $k[x_1,\ldots,x_m]/(f)$ is the maximal ideal $(\overline x_1,\ldots,\overline x_m)$ where $\overline x_i$ are the residues of $x_i$ in $k[x_1,\ldots,x_m]/(f)$.  
We can extend $\nu^*$ to a pseudo valuation which dominates $k[x_1,\ldots,x_m]$ by prescribing that for $g\in k[x_1,\ldots,x_m]$,
$$
\nu^*(g)=\left\{\begin{array}{ll} \nu^*(\overline g)&\mbox{ if the residue $\overline g$ of $g$ in $k[x_1,\ldots,x_m]/(f)$ is nonzero,}\\
\infty&\mbox{ if }\mbox{$f$ divides $g$ in $k[x_1,\ldots,x_m]$}.
\end{array}\right.
$$
Suppose that the natural map $k[x_1,\ldots,x_{m-1}]\rightarrow k[x_1,\ldots,x_m]/(f)$ is an inclusion and $\overline x_m$ is nonzero. 
Further suppose that $\nu^*(x_1),\ldots,\nu^*(x_n)$ is a rational basis of $\Phi_{\nu^*}\otimes\QQ$ (so that $n\le \mbox{trdeg}_kK^*=m-1$).   

In Section B of \cite{Z1}, Zariski uses the ``Algorithm of Perron'' to define two types of Cremona transforms, (\ref{A6}) and (\ref{A1}) below. 

We construct  a Perron transform
\begin{equation}\label{A6}
x_i=\left\{
\begin{array}{ll}
\prod_{j=1}^nx_j(1)^{a_{ij}}&\mbox{ if  }1\le i\le n\\
x_i(1)&\mbox{ if }n<i\le m
\end{array}\right.
\end{equation}
where $a_{ij}\in \NN$ satisfy $\mbox{Det}(a_{ij})=1$ and $0<\nu^*(x_i(1))<\infty$ for $1\le i\le m$

We now construct another type of Perron transform.
 We have that $\nu^*(x_m)$ is rationally dependent on $\nu^*(x_1),\ldots,\nu^*(x_n)$. There exists a Perron transform 
\begin{equation}\label{A1}
x_i=\left\{\begin{array}{ll}
 (\prod_{j=1}^nx_j(1)^{a_{ij}})(x_m(1)+c)^{a_{i,n+1}}&\mbox{ if }1\le i\le n\\
(\prod_{j=1}^nx_j(1)^{a_{n+1,j}})(x_m(1)+c)^{a_{n+1,n+1}}&\mbox{ if }i=m\\
x_i(1)&\mbox{ if }n<i<m
\end{array}\right.
\end{equation}
with $c\in k$ nonzero, $a_{ij}\in \NN$ and $\mbox{Det}(a_{ij})=1$ such that $0<\nu^*(x_i(1))<\infty$ for $1\le i\le m-1$ and 
$\nu^*(x_1(1)),\ldots,\nu^*(x_n(1))$ is  a rational basis of $\Phi_{\nu^*}\otimes\QQ$. Further, $0<\nu^*(x_m(1))$. We have that $\nu^*(x_m(1))<\infty$, unless $x_m(1)$ is a local equation of the strict transform of $f$ in $k[x_1(1),\ldots,x_m(1)]$.

For Perron transforms of type (\ref{A6}) or (\ref{A1}),
$k[x_1(1),\ldots,x_m(1)]$ is the ring of regular functions of an affine neighborhood of the center of $\nu^*$ on a sequence of blow ups of nonsingular subvarieties above the affine $m$-space with regular functions $k[x_1,\ldots,x_m]$. The maximal ideal of the center of $\nu^*$ on $k[x_1(1),\ldots,x_m(1)]$ is $(x_1(1),\ldots,x_m(1))$. We have an expression
$$
f=x_1(1)^{c_1}\cdots x_m(1)^{c_m}f_1
$$
where $f_1\in k[x_1(1),\ldots,x_m(1)]$ is irreducible. The ring $k[x_1(1),\ldots,x_m(1)]/(f_1)$ is the ring of regular functions of an affine neighborhood of the center of $\nu^*$ on a birational transformation of the  affine variety with regular functions  $k[x_1,\ldots,x_m]/(f)$. 

\begin{Lemma}\label{Lemma11} Suppose that $M_1=x_1^{d_1}\cdots x_n^{d_n}$ and $M_2=x_1^{e_1}\cdots x_n^{e_n}$ with 
$$
d_1,\ldots,d_n,e_1,\ldots,e_n\ge 0
$$
 and
$\nu(M_1)<\nu(M_2)$. Then there exists a Perron transform of type (\ref{A6}) such that $M_1$ divides $M_2$ in $k[x_1(1),\ldots,x_m(1)]$.
\end{Lemma}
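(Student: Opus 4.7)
The plan is to reduce the divisibility statement to a linear algebra problem about unimodular nonnegative integer matrices, and then to solve that problem by an iterative Perron-style algorithm.

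Set $c_i = e_i - d_i \in \ZZ$ for $1 \le i \le n$, so that the hypothesis $\nu^*(M_1) < \nu^*(M_2)$ translates into $\sum_{i=1}^n c_i \nu^*(x_i) > 0$. A Perron transform of type (\ref{A6}) is specified by a unimodular matrix $A = (a_{ij})_{1\le i,j\le n}$ with nonnegative integer entries. Under such a transform, the monomial $x_1^{f_1}\cdots x_n^{f_n}$ becomes $\prod_{j=1}^n x_j(1)^{\sum_i f_i a_{ij}}$. Therefore $M_1$ divides $M_2$ in $k[x_1(1),\ldots,x_m(1)]$ if and only if $\sum_i c_i a_{ij} \ge 0$ for every column index $j$, while the conditions built into the definition of a Perron transform require $\sum_i a_{ij}\nu^*(x_i) > 0$ for every $j$, so that each new valuation $\nu^*(x_j(1))$ is positive. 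Hence the lemma amounts to producing a unimodular nonnegative integer matrix whose columns all lie in the intersection of the open positive cone (with respect to $\nu^*$) and the closed half space $\{t : \sum_i c_i t_i \ge 0\}$, knowing that the positive ray $(\nu^*(x_1),\ldots,\nu^*(x_n))$ itself lies in that half space.

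I would construct $A$ as a composition of elementary Perron substitutions of the form $x_j = x_j(1) \cdot x_i(1)^\ell$, with the other variables left fixed; each of these is itself a Perron transform of type (\ref{A6}) given by an elementary unimodular nonnegative integer matrix. Starting from the identity, at each stage I choose an index $i_0$ with $c_{i_0} < 0$ (if none exists we are already done) and an index $j_0 \ne i_0$ with $c_{j_0} > 0$, and perform a substitution $x_{j_0} = x_{j_0}(1) x_{i_0}(1)^\ell$ with $\ell \ge 1$ chosen so that $\ell < \nu^*(x_{j_0})/\nu^*(x_{i_0})$ (to guarantee $\nu^*(x_{j_0}(1)) > 0$) while $c_{i_0} + \ell c_{j_0}$ is strictly larger than $c_{i_0}$. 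The hypothesis $\sum_i c_i \nu^*(x_i) > 0$ together with the $\QQ$-linear independence of the $\nu^*(x_i)$ guarantees that such a choice is always possible after, if necessary, performing a few auxiliary substitutions first. The invariant $\sum_{c_i < 0} (-c_i)$ strictly decreases at each nontrivial stage, forcing termination with all $c_i \ge 0$; the composite matrix is then the desired $A$.

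The main obstacle is the situation in which no single index $j_0$ satisfies the inequality $c_{j_0}\nu^*(x_{j_0}) > |c_{i_0}|\nu^*(x_{i_0})$, so that no single elementary substitution directly reduces the negativity of $c_{i_0}$. In this case one must first apply a short sequence of auxiliary elementary Perron substitutions among the indices $j$ with $c_j > 0$ to concentrate the positive weight from $\sum_i c_i \nu^*(x_i) > 0$ into one variable, and only then proceed to reduce $c_{i_0}$. This is exactly the multidimensional continued-fraction step in the algorithm of Perron, and verifying that it terminates uses in an essential way the $\QQ$-linear independence of $\nu^*(x_1),\ldots,\nu^*(x_n)$ from the rational basis hypothesis; this is standard in Zariski's local uniformization argument in \cite{Z1}.
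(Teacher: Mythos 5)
Your reduction of the lemma to a statement about unimodular matrices with nonnegative integer entries is essentially right (modulo one inaccuracy: the positivity of the new values $\nu^*(x_j(1))$ is \emph{not} the condition $\sum_i a_{ij}\nu^*(x_i)>0$, which is automatic; since $\nu^*(x_i)=\sum_j a_{ij}\nu^*(x_j(1))$, the correct requirement is that the vector $A^{-1}(\nu^*(x_1),\ldots,\nu^*(x_n))^t$ have positive entries, so your ``columns in the open positive cone'' reformulation is off). The elementary-step bookkeeping you do afterwards is, however, correct: for $x_{j_0}=x_{j_0}(1)x_{i_0}(1)^{\ell}$ one has $\nu^*(x_{j_0}(1))=\nu^*(x_{j_0})-\ell\,\nu^*(x_{i_0})$ and the exponent vector changes by $c_{i_0}\mapsto c_{i_0}+\ell c_{j_0}$.

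The genuine gap is in the termination argument. Your main step requires an index $j_0$ with $c_{j_0}>0$ \emph{and} $\nu^*(x_{j_0})>\nu^*(x_{i_0})$ (otherwise no $\ell\ge 1$ is admissible), and your proposed remedy when no such $j_0$ exists --- auxiliary substitutions among the indices with $c_j>0$ --- cannot create one: every elementary substitution leaves all values unchanged except that it strictly \emph{decreases} the value of the substituted variable, so substitutions among the positive-$c$ indices never raise any $\nu^*(x_j)$ and never touch $\nu^*(x_{i_0})$. Thus the algorithm can stall, e.g.\ already for $n=2$ with $c=(-1,10)$, $\nu^*(x_1)=1$, $\nu^*(x_2)$ slightly above $1/5$: here $c\cdot v>0$ but no main step and no auxiliary step of your type is available. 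In this configuration one is forced to substitute so as to lower $\nu^*(x_{i_0})$, i.e.\ $x_{i_0}=x_{i_0}(1)x_j(1)^{\ell}$, which decreases $c_j$ by $\ell|c_{i_0}|$ and so can destroy your claimed monotone invariant $\sum_{c_i<0}(-c_i)$; proving that such a scheme nevertheless terminates is exactly the nontrivial content of Perron's algorithm, where the substitutions are dictated by the values alone (repeatedly dividing by the variable of smallest value) and termination follows from the fact that the columns of the accumulated unimodular matrix converge projectively to the ray of $(\nu^*(x_1),\ldots,\nu^*(x_n))$, so that $c^tA$ eventually has nonnegative entries because $\sum_i c_i\nu^*(x_i)>0$. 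This is how the sources the paper cites (Theorem 2 of \cite{Z1}, Lemma 4.2 of \cite{C}) prove the lemma; the rational independence of the values enters there, not in enabling a greedy choice of $\ell$ as in your sketch. As written, your proof is incomplete at precisely this point.
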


This lemma is proved in Theorem 2 \cite{Z1} of Lemma 4.2 \cite{C}.

Suppose that we have a Perron transform of type (\ref{A1}). Let
$A=(a_{ij})^t$, an $(n+1)\times(n+1)$ matrix. Suppose that $d_1,\ldots,d_{n+1},e_1,\ldots,e_{n+1}\in \NN$ and
\begin{equation}\label{A2}
\nu^*(x_1^{d_1}\cdots x_n^{d_n}x_m^{d_{n+1}})=\nu^*(x_1^{e_1}\cdots x_n^{e_n}x_m^{e_{n+1}}).
\end{equation}
We have that
$$
\begin{array}{l}
x_1^{d_1}\cdots x_n^{d_n}x_m^{d_{n+1}}\\
= \left[\prod_{i=1}^n[(\prod_{j=1}^nx_j(1)^{a_{ij}})(x_m(1)+c)^{a_{i,n+1}}]^{d_i}\right]
\left[(\prod_{j=1}^nx_j(1)^{a_{n+1,j}})(x_m(1)+c)^{a_{n+1,n+1}}\right]^{d_{n+1}}\\
=\left(\prod_{j=1}^nx_j(1)^{\sum_{i=1}^{n+1}a_{ij}d_i}\right)(x_m(1)+c)^{\sum_{i=1}^{n+1}a_{i,n+1}d_i}.
\end{array}
$$
Similarly, we have
$$
\begin{array}{l}
x_1^{e_1}\cdots x_n^{e_n}x_m^{e_{n+1}}\\
=\left(\prod_{j=1}^nx_j(1)^{\sum_{i=1}^{n+1}a_{ij}e_i}\right)(x_m(1)+c)^{\sum_{i=1}^{n+1}a_{i,n+1}e_i}.
\end{array}
$$
By (\ref{A2}), we have that 
$$
\sum_{i=1}^{n+1}a_{ij}d_i=\sum_{i=1}^{n+1}a_{ij}e_i\mbox{ for }1\le j\le n.
$$
Let 
$$
\gamma=\sum_{i=1}^{n+1}a_{i,n+1}e_i-\sum_{i=1}^{n+1}a_{i,n+1}d_i.
$$
We have that 
\begin{equation}\label{A5}
A\left(\begin{array}{c}
d_1-e_1\\d_2-e_2\\\vdots\\d_n-e_n\\d_{n+1}-e_{n+1}\end{array}\right)
=\left(\begin{array}{l} 0\\0\\\vdots\\0\\ \gamma\end{array}\right).
\end{equation}
By Cramer's rule, 
\begin{equation}\label{A3}
d_i-e_i=(-1)^{n+1+i}\gamma{\rm Det}(A_{n+1,i})\mbox{ for }1\le i\le n+1
\end{equation}
where $A_{n+1,i}$ is  the submatrix of $A$ obtained by removing the $(n+1)$-st row and $i$-th column from $A$.

\section{Admissible families of valuations and defect} Let $K$ be a field and $\nu$ be a valuation on $K$. Let $K^*=K(z)$ be a finite primitive extension of $K$ with $z\not\in K$ and $\nu^*$ be an extension of $\nu$ to $K^*$. Let $f(x)\in K[x]$ be the minimal polynomial of $z$ over $K$. As explained in \cite{Vaq1} and \cite{Vaq2} (which extend work of MacLane in \cite{Mc1} and \cite{Mc2}), we may represent $\nu^*$ by an admissible family of valuations $\mathcal A=(\mu_{\alpha})_{l\in I}$ of $K[x]$ whose restriction to $K$ is $\nu$, and are successive approximations to $\nu^*$, whose final element is a pseudo valuation which gives $\nu^*$ on $K^*\cong K[x]/(f)$. There is a decomposition (Section 2.1 \cite{Vaq1})
$$
\mathcal A=\mathcal S^{(1)}\cup\cdots \cup \mathcal S^{(N)}
$$
where $N\ge 1$ and each $\mathcal S^{(i)}$ is a simple admissible family (Definition, page 3471 \cite{Vaq1}).  Here $I$ is a totally ordered set, and except for the last element of $I$, which is a pseudo valuation, each $\mu_l$ is either an augmented valuation or a limit augmented valuation. In the first case, of an augmented valuation (Definition on page 3443 \cite{Vaq1}), we have that $\mu_l=[\mu_{l-1};\mu_l(\phi_l)=\gamma_l]$ where $\phi_l$ is a key polynomial (page 3442 \cite{Vaq1}), defining $\mu_l$ in terms of $\mu_{l-1}$, and in the second case, of a limit augmented valuation (Definition on page 3466 \cite{Vaq1}), we have that
$\mu_l=[(\mu_{\alpha})_{\alpha\in A};\mu_l(\phi_l)=\gamma_l]$ where $\phi_l$ is a limit key polynomial (page 3465 \cite{Vaq1}) defining the valuation $\nu_l$ in terms of the continuous family $(\mu_{\alpha})_{\alpha\in A}$.  

Each $S^{(j)}=(\mu_1^{(j)},\ldots,\nu_{n_j}^{(j)};(\mu_{\alpha}^{(j)})_{\alpha\in A^{(j)}})$ for $1\le j\le N$. We have that $\deg \phi_i^{(j)}>\deg \phi_{i-1}^{(j)}$ if $2\le i\le n_j$ and $\deg \phi_{\alpha}^{(j)}=\deg \phi_{n_j}^{(j)}$ for all $\alpha\in A^{(j)}$. Each $(\mu_{\alpha}^{(j)})_{\alpha\in A^{(j)}}$ is a continuous, exhaustive family of iterated augmented valuations (Definition, page 3464 \cite{Vaq1}). In particular, $A^{(j)}$ does not have a maximum.

The first valuation $\mu_1^{(1)}$ of the family $\mathcal S^{(1)}$ is an augmented valuation of the form $\mu_1^{(1)}=[\mu_0;\mu_1^{(1)}(\phi_1^{(1)})=\gamma_1]$ where $\mu_0$ is the valuation $\nu$ of $K$ and $\phi_1^{(1)}$ is a polynomial of degree 1 in $K[x]$. For $t\ge 2$, the first valuation $\mu_1^{(t)}$ of $\mathcal S^{(t)}$ is a limit augmented valuation for the family $(\mu_{\alpha}^{(t-1)})_{\alpha\in A^{(t-1)}}$. $\mathcal S^{(N)}$ has a largest element, which is either an augmented valuation, or a limit augmented valuation $\mu_{\overline l}$ associated to the polynomial $\phi_{\overline l}=f$ with value $\gamma_{\overline l}=\infty$.

For $1\le j\le N-1$, we have associated rational numbers $S^{(j)}(\mathcal A)$ called   jumps,   and the total jump of the extension is (page 861 \cite{Vaq2})
$$
s^{\rm tot}(\mathcal A)=\prod_{j=2}^Ns^{(j-1)}(\mathcal A).
$$
The jump $S^{(j-1)}(\mathcal A)$ is defined (Definition 2.7, page 870 \cite{Vaq2}) by 
$$
\deg \phi_1^{(j)}=S^{(j-1)}(\mathcal A) \deg \phi_{\alpha}^{(j-1)}
$$
where $\phi_1^{(j)}$ is the limit key polynomial defining the valuation $\mu_1^{(j)}$ and $\phi_{\alpha}^{(j-1)}$ is a key polynomial associated to the continuous family $(\mu_{\alpha}^{(j-1)})_{\alpha\in A^{(j-1)}}$.

By Lemma 2.11 \cite{Vaq2}
\begin{equation}\label{Jump1}
S^{(j-1)}(\mathcal A)>1\mbox{ for }2\le j\le N
\end{equation}
and by Corollary 2.10 \cite{Vaq2},
\begin{equation}\label{Jump2}
[K^*:K]=e(\nu^*/\nu)f(\nu^*/\nu)S^{\rm tot}(\mathcal A).
\end{equation}
Thus if $\nu^*$ is the unique extension of $\nu$ to $K^*$, then by Ostrowski's lemma, equation (\ref{Ost}),
\begin{equation}\label{jumpdefect}
p^\delta(\nu^*/\nu)=\mathcal S^{\rm tot}(\mathcal A).
\end{equation}

\begin{Proposition}\label{Prop1} Suppose that 
$$
\nu^*(z-K)=\{\nu^*(z-a)\mid a\in K\}
$$
 does not have a largest element. Then $N\ge 2$.
\end{Proposition}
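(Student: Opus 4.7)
The plan is to prove the contrapositive: assume $N = 1$ and exhibit the largest element of $\nu^*(z - K)$. Under this assumption $\mathcal{A} = \mathcal{S}^{(1)}$, and the first valuation is $\mu_1^{(1)} = [\mu_0; \mu_1^{(1)}(\phi_1^{(1)}) = \gamma_1]$ with a degree-one key polynomial $\phi_1^{(1)} = x - a_1$, $a_1 \in K$. I claim that $\gamma_1 = \nu^*(z - a_1) = \max \nu^*(z - K)$.

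First, a direct computation: the $\phi_1^{(1)}$-adic expansion is $x - a = \phi_1^{(1)} + (a_1 - a)$, so the definition of an augmented valuation gives
\begin{equation*}
\mu_1^{(1)}(x-a)=\min\bigl(\gamma_1,\,\nu(a_1-a)\bigr)\le\gamma_1,
\end{equation*}
with equality at $a = a_1$ (since $\nu(0) = +\infty$). I then propagate this identity to the terminal pseudo-valuation $\mu^*$, which represents $\nu^*$ via $K^* \iso K[x]/(f)$. The essential fact from Vaqui\'e's framework is that whenever $\mu' = [\mu; \mu'(\phi) = \gamma]$ (augmented or limit augmented) has key polynomial $\phi$ of degree $d$, any $g \in K[x]$ with $\deg g < d$ satisfies $\mu'(g) = \mu(g)$, because its $\phi$-adic expansion is just $g$ as a zeroth coefficient. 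Every key polynomial appearing after $\phi_1^{(1)}$ in $\mathcal{S}^{(1)}$ has degree at least $2$: for $2 \le i \le n_1$ one has $\deg \phi_i^{(1)} > \deg \phi_{i-1}^{(1)}$, the continuous family inherits the degree of $\phi_{n_1}^{(1)}$, and the terminal (limit) key polynomial is $f$ of degree $[K^*:K] \ge 2$ since $z \notin K$. Iterating the invariance along the chain of augmentations in $\mathcal{S}^{(1)}$ therefore yields $\mu^*(x - a) = \mu_1^{(1)}(x - a)$, and hence $\nu^*(z - a) \le \gamma_1 = \nu^*(z - a_1)$ for every $a \in K$, proving the contrapositive.

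The main obstacle is the degenerate configuration $n_1 = 1$ with the continuous family $(\mu_\alpha^{(1)})$ consisting of degree-one key polynomials, where the inductive invariance in the preceding step would break down. I would handle this by showing it is incompatible with the hypothesis $N = 1$: the terminal limit key polynomial $f$ would then have strictly greater degree than all $\phi_\alpha^{(1)}$, which is precisely the configuration defining a nontrivial jump $S^{(1)}(\mathcal{A}) > 1$ via the relation $\deg \phi_1^{(2)} = S^{(1)}(\mathcal{A}) \cdot \deg \phi_\alpha^{(1)}$, forcing the pseudo-valuation into a new simple family $\mathcal{S}^{(2)}$ and hence $N \ge 2$, contrary to hypothesis. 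Ruling out this corner case completes the proof.
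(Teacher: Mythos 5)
Your contrapositive argument is correct, but it takes a genuinely different route from the paper's. The paper argues directly: assuming $\nu^*(z-K)$ has no largest element, it invokes Proposition 2.3 and Theorem 2.4 of \cite{Vaq1} to write out the first simple family explicitly --- the augmentation $\nu_1^{(1)}=[\nu;\nu_1^{(1)}(z)=\nu^*(z)]$ followed by the continuous family $(\nu_{\alpha}^{(1)})_{\alpha\in A}$ with degree-one key polynomials $z-h_\alpha$ indexed by $A=\{\alpha\in\nu^*(z-K)\mid\alpha>\nu^*(z)\}$ --- so the pseudo-valuation can only be reached through the limit augmented valuation $\lim_{\alpha\in A}\nu_\alpha^{(1)}$, which by convention opens $\mathcal S^{(2)}$, whence $N\ge 2$. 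You instead assume $N=1$ and use the elementary fact that augmenting by a key polynomial of degree at least $2$ does not change values of degree-one polynomials, so the terminal pseudo-valuation agrees with $\mu_1^{(1)}$ on every $x-a$ and $\gamma_1=\nu^*(z-a_1)$ is the maximum of $\nu^*(z-K)$; this is more self-contained, needing only the $\phi$-adic definition of augmented valuations and the degree bookkeeping inside a simple family, while the paper's route has the advantage of exhibiting the explicit initial structure of the admissible family. Two cautions on your write-up. First, your invariance claim for limit augmented valuations should be stated via the stable value $\mu_\alpha(g)$ for $\alpha$ large along the continuous family rather than via a single predecessor; for degree-one $g$ and a family of key polynomials of degree at least $2$ this coincides with what you want, and under $N=1$ you never actually need the limit case. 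Second, your disposal of the corner case $n_1=1$ through the jump relation is slightly circular, since $S^{(1)}(\mathcal A)$ is only defined once the limit valuation has been placed at the head of $\mathcal S^{(2)}$; the cleaner statement is that a continuous family has no maximum, so if $N=1$ the family could not terminate in the required pseudo-valuation unless no continuous part occurs at all --- any valuation following a continuous family is a limit augmented valuation and by definition begins the next simple family, which is precisely the structural convention the paper's own proof relies on. With that rephrasing your argument is complete.
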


The set $\nu^*(z-K)$  determines  the distance of $z$ from $K$, investigated by Kuhlmann in \cite{Ku3}. 

\begin{proof} Since $\nu^*(z-K)$ does not have a largest element, we calculate from  Proposition 2.3 and Theorem 2.4 \cite{Vaq1} that 
$$
S^{(1)}=\mathcal D^{(1)}\cup \mathcal C^{(1)}
$$
where $\mathcal D^{(1)}=(\nu_1^{(1)})$ and $\mathcal C^{(1)}=(\nu_{\alpha}^{(1)})_{\alpha\in A}$, with 
$\nu_1^{(1)}=[\nu;\nu_1^{(1)}(\phi_1^{(1)})=\gamma_1^{(1)}]$ with $\phi_1^{(1)}=z$ and $\gamma_1^{(1)}=\nu^*(z)$ and 
$$
A=\{\alpha\in \nu^*(z-K)\mid \alpha>\gamma_1^{(1)}\},
$$
and for $\alpha\in A$, 
$$
\nu_{\alpha}^{(1)}=[\nu_1^{(1)};\nu_{\alpha}^{(1)}(\phi_{\alpha}^{(1)})=\alpha],
$$
where $\phi_{\alpha}^{(1)}=z-h_{\alpha}$ for some $h_{\alpha}\in K$ such that $\nu^*(\phi_{\alpha}^{(1)})=\alpha$. We have that $S^{(2)}$ begins with the augmented limit valuation 
$$
\nu_1^{(2)}=\lim_{\alpha\in A}\nu_{\alpha}^{(1)}.
$$
Thus $N\ge 2$.
\end{proof}

\begin{Lemma}\label{Lemma2} Suppose that $K$ contains a field $k$ such that $k\subset V_{\nu^*}$ and $V_{\nu^*}/m_{\nu^*}=k$. Further suppose that $\nu^*(z-K)$ has a largest element $\gamma$. Then $\gamma\not\in \Phi_{\nu}$.
\end{Lemma}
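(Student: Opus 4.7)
The plan is to argue by contradiction. Suppose $\gamma \in \Phi_\nu$, and pick $h \in K$ realizing the maximum, so $\nu^*(z-h)=\gamma$. Since $\gamma \in \Phi_\nu$, there exists $a \in K^\times$ with $\nu(a)=\gamma$. I would then consider the element
$$
u \;=\; \frac{z-h}{a} \;\in\; K^*,
$$
which satisfies $\nu^*(u)=0$, so $u \in V_{\nu^*}$.

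The key step exploits the hypothesis $V_{\nu^*}/m_{\nu^*}=k$ with $k \subset K$. The residue $\overline{u} \in V_{\nu^*}/m_{\nu^*}=k$ is therefore an element of $K$ (in fact of $k$), and it is nonzero because $\nu^*(u)=0$. Lifting, there exists $c \in k^\times \subset K$ such that
$$
\nu^*\!\left(\frac{z-h}{a}-c\right) \;>\; 0,
$$
or equivalently
$$
\nu^*\bigl(z-(h+ca)\bigr) \;>\; \nu^*(a) \;=\; \gamma.
$$
But $h+ca \in K$, so this exhibits an element of $\nu^*(z-K)$ strictly greater than $\gamma$, contradicting the maximality of $\gamma$. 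Hence $\gamma \notin \Phi_\nu$.

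I do not expect any real obstacle: the entire argument is the standard ``residue-lifting'' trick that shows distances from $K$ cannot be realized in $\Phi_\nu$ when the residue extension is trivial. The only point requiring minor care is noting that $c$ can be chosen in $K$ (rather than in some larger lift of the residue field), which is precisely what the assumption $V_{\nu^*}/m_{\nu^*}=k \subset K$ guarantees; without this hypothesis one would only obtain $\gamma \notin \Phi_{\nu^*}$ modulo the image of $k^\times$, which would not suffice.
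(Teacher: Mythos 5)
Your argument is correct and is essentially the paper's own proof: the paper also takes $g\in K$ with $\nu(g)=\gamma$, lets $c\in k$ be the residue of $\frac{z-h}{g}$, and derives the contradiction $\nu^*(z-h-cg)>\gamma$. Your extra remark that the residue is nonzero (since $\nu^*\bigl(\frac{z-h}{g}\bigr)=0$) is a harmless refinement; nothing further is needed.
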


\begin{proof} Suppose that $\gamma\in \Phi_{\nu}$.  Let $h\in K$ be such that $\nu^*(z-h)=\gamma$. Since $\gamma\in \Phi_{\nu}$, there exists $g\in K$ such that $\nu(g)=\gamma$. Let $c$ be the class of $\frac{z-h}{g}$ in $V_{\nu^*}/m_{\nu^*}=k$. Then $\nu^*(z-h-cg)>\nu^*(z-h)=\gamma$, a contradiction.
\end{proof}

\section{An algorithm for reduction of multiplicity}\label{Alg}

Let $k$ be a field, $T$ be a polynomial  ring $T=k[x_1,\ldots,x_m]$, $f\in T$ be irreducible and monic in $x_m$ and $S=T/(f)$. Let $K^*$ be the quotient field of $S$. Let $z$ be the class of $x_m$ in $S\subset K^*$. Assume that $K^*$ is separable over $K$. Let $R=k[x_1,\ldots,x_{m-1}]$ and $K$ be the quotient field of $R$, so that $K^*$ is a finite extension of $K$. Let $r=\mbox{ord }f(0,\ldots,0,x_m)$. Let $\nu^*$ be a valuation of $K^*$ with restriction $\nu$ to $K$. Suppose that  $\nu^*$  has rank 1 and $V_{\nu^*}/m_{\nu^*}=k$. Suppose that $S\subset V_{\nu^*}$.

We can regard $\nu^*$ as a pseudo valuation on $T$, where for $g\in T$, $\nu^*(g)=\infty$ 
if $f$ divides $g$.
Suppose that the center of $\nu^*$ on $T$ is $(x_1,\ldots,x_m)$.

The first part of the algorithm of Theorem \ref{Theorem3}  (through equation (\ref{A30})) is the same as in \cite{Z1}.





\begin{Theorem}\label{Theorem3} Let  assumptions be as above, and suppose  that embedded local uniformization (Definition \ref{DefELU})) is true in dimension  $m-1$. Suppose that $r>1$ and $\nu^*(z)\not\in \Phi_{\nu}$. Then there exists a birational extension along $\overline\nu^*$,
$T\rightarrow T'=k[x_1',\ldots,x_m']$, such that $T'\subset V_{\overline \nu^*}$ and $(x_1',\ldots,x_m')$ is the center of $\overline\nu^*$ on $T'$, and if $f'$ is the strict transform of $f$ in $T'$, so that $S'=T'/(f')$ is a birational extension $S\rightarrow S'$ along $\nu^*$, we have that $\mbox{ord }f'(0,\ldots,0,x_m')<r$.
\end{Theorem}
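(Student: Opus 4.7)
The plan is to extend Zariski's characteristic-zero algorithm (summarized in the introduction) by replacing the Tschirnhaus substitution step -- which fails in characteristic $p$ when $p\mid r$ -- with an argument that exploits the hypothesis $\nu^*(z)\not\in\Phi_\nu$. First I would write $f = x_m^d + \sum_{i=0}^{d-1}a_i(x_1,\ldots,x_{m-1})\,x_m^i$ with $a_i\in R := k[x_1,\ldots,x_{m-1}]$. The hypothesis $r>1$ together with the fact that $(x_1,\ldots,x_m)$ is the center of $\nu^*$ gives $a_i(0)=0$ for $i<r$ while $a_r(0)\ne 0$ (so $a_r$ is a local unit). I would then invoke embedded local uniformization in dimension $m-1$ applied to the valuation $\nu$ and to the element $\prod_{a_i\ne 0}a_i\in R$; this produces a birational extension $R\to R^{(1)} = k[y_1,\ldots,y_{m-1}]$ dominated by $\nu$, with parameters $y_1,\ldots,y_n$ ($n\le m-1$) whose $\nu$-values form a rational basis of $\Phi_\nu\otimes\QQ$, and factorizations $a_i = M_i u_i$ where $M_i$ is a monomial in $y_1,\ldots,y_n$ and $u_i$ is a local unit at the maximal ideal of $R^{(1)}$.

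Setting $\beta := \nu^*(z)$, I would analyze the values $\nu^*(a_ix_m^i) = \nu(M_i) + i\beta$. Because $f$ vanishes in $K^*$, the minimum $v$ of these values must be attained by at least two distinct indices whose leading graded pieces cancel. The hypothesis $\beta\not\in\Phi_\nu$ forces any pair of matching indices to differ by a multiple of the least positive integer $e$ with $e\beta\in\Phi_\nu$, and $e\ge 2$. Since $\nu(M_r)=0$ and $\nu(M_i)\ge 0$ for all $i$, we have $v\le r\beta$; moreover if $v = r\beta$, then any index matching $i = r$ has the form $r-je < r$ for some $j\ge 1$, while if $v < r\beta$, both canceling indices lie in $\{0,\ldots,r-1\}$. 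Thus there is always an index $i^*<r$ participating in the cancellation.

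I would then iterate Lemma \ref{Lemma11} to apply Perron transforms of type (\ref{A6}) on $y_1,\ldots,y_n$, placing the monomials $\{M_i\}$ in a divisibility chain compatible with the $\nu$-ordering, and follow this with a Perron transform of type (\ref{A1}) effecting a substitution $x_m = N(x_m(1)+c)^b$ for a monomial $N$ in the new $y$-variables, a scalar $c\in k$ nonzero, and an exponent $b\ge 1$. The constant $c$ would be chosen from the unit residues $u_i(0)\in k^{\times}$ of the canceling terms so as to absorb the cancellation identity into the substitution; after factoring out the resulting common monomial $\tilde M$, the strict transform $f' = f/\tilde M$ in $T' = k[x_1',\ldots,x_m']$ would satisfy $\mbox{ord}\,f'(0,\ldots,0,x_m') \le i^* < r$, which is the reduction claimed.

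The principal obstacle is the case $v = r\beta$: the Perron substitution must genuinely lower the order at the origin. In Zariski's original argument the analogous step is a Tschirnhaus substitution that kills the coefficient of $x_m^{r-1}$ via division by $\binom{r}{r-1}=r$, which fails when $p\mid r$. Our hypothesis $\beta\not\in\Phi_\nu$ circumvents this by forcing the gap between the canceling exponents $r$ and $r-je$ to be at least $e\ge 2$, so the relevant binomial coefficient $\binom{r}{r-je}$ entering the substitution identity is not constrained to vanish in characteristic $p$, and a suitable $c\in k^{\times}$ can always be chosen. Verifying the solvability of this cancellation equation in $c$, and tracking the resulting multiplicity bound through the composition of the Perron transforms, is the main technical content of the proof.
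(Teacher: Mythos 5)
Your setup (monomializing the coefficients $a_i$ via embedded local uniformization, locating the indices where $\min_i\nu^*(a_ix_m^i)$ is attained, and then applying Perron transforms of types (\ref{A6}) and (\ref{A1})) is exactly the first half of the paper's proof, which is Zariski's. The gap is in the step where you conclude that the order drops. After the transform (\ref{A1}) the new order $r_1$ is the order of vanishing at $u=c$ of $g(u)=\sum_{i=1}^t\overline a_{\sigma_i}(0,\ldots,0)\,u^{(\sigma_i-\sigma_1)/d}$, where $d=\mbox{Det}(A_{n+1,n+1})$; it is \emph{not} bounded by the smallest canceling index $i^*$, so your claim $\mbox{ord }f'(0,\ldots,0,x_m')\le i^*<r$ does not follow. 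Moreover your observation that all canceling indices differ by multiples of $e\ge 2$ does not exclude the bad case $r_1=r$: that case occurs precisely when $g(u)$ is a constant times $(u-c)^r$, and in characteristic $p$ with $p\mid r$ this is entirely compatible with your congruence condition (e.g.\ $r=p=e$, $g(u)=a(u^p-c^p)$, canceling exponents $\{0,p\}$). Your remark that $\binom{r}{r-je}$ is ``not constrained to vanish'' has the danger backwards: the problematic situation is exactly the Frobenius-type one in which the intermediate binomial coefficients \emph{do} vanish, and nothing in your argument rules it out. Finally, in a Perron transform along $\nu^*$ the constant $c$ is not a free parameter to be solved for so as to ``absorb the cancellation''; it is forced by the valuation (it is the residue of the relevant monomial quotient), so there is no cancellation equation in $c$ to arrange, and indeed you flag this unverified step as the main technical content.

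What the paper does at this point is different and uses the unimodularity of the Perron matrix, which your argument never exploits. If $r_1=r$, then necessarily $\sigma_1=0$, $\sigma_t=r$ and $d=1$ (equation (\ref{A30})). Feeding the equality $\nu^*(a_{\sigma_1})=\nu^*(a_{\sigma_t}x_m^r)$ (with $\nu(a_{\sigma_t})=0$) into (\ref{A5}) and applying Cramer's rule (\ref{A3}) to the matrix $A$ with $\mbox{Det}(A)=1$ gives $d_i(\sigma_1)=(-1)^{n+i}\,r\,\mbox{Det}(A_{n+1,i})$ for $1\le i\le n$, so $r$ divides each exponent of the monomial $a_{\sigma_1}$, and hence
$$
\nu^*(x_m)=\sum_{i=1}^n\frac{d_i(\sigma_1)}{r}\,\nu(x_i(2))\in\Phi_{\nu},
$$
contradicting the hypothesis $\nu^*(z)\notin\Phi_{\nu}$. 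Note that your weaker conclusion $(\sigma_t-\sigma_1)\nu^*(z)\in\Phi_\nu$ only says $e\mid r$, which is no contradiction; the divisibility of the individual exponents $d_i(\sigma_1)$ by $r$ is the essential extra information. With this argument the reduction $r_1<r$ is achieved in a single pass of the algorithm, with no iteration and no Tschirnhaus-type substitution, which is precisely how the hypothesis $\nu^*(z)\notin\Phi_\nu$ replaces the characteristic-zero step (\ref{eq10}).
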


\begin{proof} Let $n$ be the rank of $\nu^*$.  Since $K^*$ is a finite extension of $K$, we have that $\mbox{rank }\nu=1$ and $\mbox{rat rank }\nu^*=\mbox{rat rank }\nu=n$. By embedded local uniformization in $R$, there exists a birational transform $R\rightarrow R_1=k[x_1(1),\ldots,x_{m-1}(1)]$ such that the center of $\nu$ on $R_1$ is $(x_1(1),\ldots,x_{m-1}(1))$  and $\nu(x_1(1)),\ldots,\nu(x_n(1))$ is a rational basis of $\Phi_{\nu}\otimes \QQ$. Let $x_m(1)=x_m$ and $T_1$ be the polynomial ring $T_1=k[x_1(1),\ldots, x_m(1)]$. We have that the center of $\nu^*$ on $T_1$ is  $(x_1(1),\ldots,x_m(1))$. Since $f$ is monic in $x_m$, $f$ is irreducible in $T_1$, and is monic in $x_m(1)$. Thus $S_1=T_1/(f)$ is  a domain with quotient field $K^*$, so $S\rightarrow S_1$ is a birational extension.

Expand
\begin{equation}\label{A22}
f=x_m(1)^e+a_{e-1}x_m(1)^{e-1}+\cdots+a_0
\end{equation}
with $a_i\in R_1$ for $0\le i \le e-1$. Let $a_e=1$. By embedded local uniformization in $R_1$, there exists a birational extension $R_1\rightarrow R_2=k[x_1(1),\ldots,x_{m-1}]$ along $\nu$ such that the center of $\nu$ on $R_2$ is $(x_1(2),\ldots,x_{m-1}(2))$, 
$$
\nu(x_1(2)),\ldots, \nu(x_n(2))
$$
 is a rational basis of $\Phi_{\nu}\otimes \QQ$ and if $a_i\ne 0$, then 
$$
a_i=x_1(2)^{d_1(i)}\cdots x_n(2)^{d_n(i)}\overline a_i
$$
with $d_1(i),\ldots,d_n(i)\in \NN$ and $\overline a_i\not\in (x_1(2),\ldots,x_{m-1}(2))$. Let $x_m(2)=x_m(1)$ and $T_2=k[x_1(2),\ldots,x_m(2)]$. We have that $d_1(e)=\cdots=d_n(e)=0$.
Let 
$$
\rho=\min\{\nu^*(a_ix_m(2)^i)\mid 0\le i\le e\}.
$$
There exist $t$ natural numbers $\sigma_i$ with
$$
0\le \sigma_1<\sigma_2<\cdots<\sigma_t\le r
$$
such that $\nu^*(a_{\sigma_i}x_m(2)^{\sigma_i})=\rho$ for $1\le i\le t$ and $\nu^*(a_lx_m(2)^l)>\rho$ if $l\ne \sigma_i$ for some $i$ with $1\le i\le t$. Since $\nu^*(f)=\infty$, we have that
\begin{equation}\label{A4}
1<t.
\end{equation}
We now perform a Perron transform $T_2\rightarrow T_3=k[x_1(3),\ldots,x_m(3)]$ of type (\ref{A1}) along $\nu^*$,
$$
x_i(2)=\left\{\begin{array}{ll}
 (\prod_{j=1}^nx_j(3)^{a_{ij}})(x_m(3)+c)^{a_{i,n+1}}&\mbox{ if }1\le i\le n\\
(\prod_{j=1}^nx_j(3)^{a_{n+1,j}})(x_m(3)+c)^{a_{n+1,n+1}}&\mbox{ if }i=m\\
x_i(3)&\mbox{ if }n<i<m.
\end{array}\right.
$$
Let $A=(a_{ij})^t$.

For $1\le l\le r$ and $1\le j\le n$, let 
$$
\tau_{j,l}=\left(\sum_{i=1}^na_{ij}d_i(l)\right)+(a_{n+1,j})l
$$
and for $1\le l\le r$, let
$$
\lambda_l=\sum_{i=1}^n a_{i,n+1}d_i(l)+(a_{n+1,n+1})l.
$$
Then 
$$
a_lx_m(1)^l=\overline a_l\left(\prod_{j=1}^nx_j(3)^{\tau_{j,l}}\right)(x_m(3)+c)^{\lambda_l}
$$
for $0\le l\le e$ and if $a_l\ne 0$. We have
$$
\tau_{j,\sigma_i}=\tau_{j,\sigma_1}
$$
for $1\le j\le n$ and $1\le i\le t$ since $\nu(x_1(3)),\ldots,\nu(x_n(3))$ is a rational basis of $\Phi_{\nu}\otimes\QQ$. By Lemma \ref{Lemma11}, after possibly performing a Perron transform of type (\ref{A6})  in $x_1(3),\ldots,x_n(3)$,
we have an expression
$$
f=\prod_{j=1}^nx_j(3)^{\tau_{j,\sigma_1}}f_1
$$
where
\begin{equation}\label{A20}
f_1=\sum_{i=1}^t\overline a_{\sigma_i}(x_m(3)+c)^{\lambda_{\sigma_i}}+h
\end{equation}
with $h\in (x_1(3),\ldots,x_n(3))T_3$ is a local equation of the strict transform of $f$ in $T_3$. Let $d=\mbox{Det}(A_{n+1,n+1})$. By (\ref{A3}),
\begin{equation}\label{A7}
(\lambda_{\sigma_i}-\lambda_{\sigma_1})d=\sigma_i-\sigma_1\mbox{ for }1\le i\le t.
\end{equation}
By (\ref{A4}), $t>1$ so $d\ne 0$. Suppose $d>0$. Then
$$
\begin{array}{lll}
f_1&=& (x_m(3)+c)^{\lambda_{\sigma_1}}\left(\sum_{i=1}^t\overline a_{\sigma_i}(x_m(3)+c)^{\lambda_{\sigma_i}-\lambda_{\sigma_1}}+h\right)\\
&=& (x_m(3)+c)^{\lambda_{\sigma_1}}\left(\sum_{i=1}^t\overline a_{\sigma_i}(x_m(3)+c)^{\frac{\sigma_i-\sigma_1}{d}}+h\right).
\end{array}
$$
Let $r_1=\mbox{ord}(f_1(0,\ldots,0,x_m(3))$. We have that
$$
r_1\le \frac{\sigma_t-\sigma_1}{d}\le r.
$$
Suppose that $r_1=r$. Then 
\begin{equation}\label{A30}
\sigma_t=r, \sigma_1=0\mbox{ and }d=1.
\end{equation}

So far, the proof has been as Zariski's in \cite{Z1}. The remainder of his proof requires characteristic zero, and is not valid in characteristic $p>0$. We provide a different analysis from here on. After this proof, we give an outline of  the conclusion of Zariski's characteristic zero proof.

 By (\ref{A5}),
$$
A\left(\begin{array}{c}
d_1(\sigma_1)-d_1(\sigma_t)\\
\vdots\\
d_n(\sigma_1)-d_n(\sigma_t)\\
\sigma_1-\sigma_t
\end{array}\right)=
A\left(\begin{array}{c}
d_1(\sigma_1)\\\vdots\\ d_n(\sigma_1)\\ -r\end{array}\right)
=\left(\begin{array}{c} 0\\ \vdots\\ 0\\ \gamma\end{array}\right)
$$
where $\gamma=\lambda_{\sigma_1}-\lambda_{\sigma_t}$.  By (\ref{A7}) and since $d=1$,
$$
\gamma=-r.
$$
Further, by (\ref{A3}), $d_i(\sigma_1)=(-1)^{n+i}r\mbox{Det}(A_{n+1,i})$ for $1\le i\le n$. Thus $r$ divides $d_i(\sigma_1)$ for $1\le i\le n$. Since 
$$
r\nu^*(x_m)=\nu^*(x_m^r)=\nu^*(x_m(2)^r)=\nu(x_1(2)^{d_1(\sigma_1)}\cdots x_n(2)^{d_n(\sigma_1)})=\sum_{i=1}^nd_i(\sigma_1)\nu(x_i(2)),
$$
we have that $\nu^*(x_m)\in \Phi_{\nu}$.

If $d<0$, we have a similar argument, writing
$$
f_1=(x_m(3)+c)^{\lambda_{\sigma_t}}(\sum_{i=1}^t\overline a_{\sigma_i}(x_m(3)+c)^{\frac{\sigma_i-\sigma_t}{d}}+h).
$$

\end{proof}

Zariski's proof of reduction of singularities in characteristic zero in \cite{Z1} (or Theorem 8.4 \cite{RES}), proceeds after (\ref{A30}) as follows. If we have $r_1=r$ in the above algorithm, we have that
$$
\sum_{i=1}^t\overline a_{\sigma_i}(0,\ldots,0)(x_m(3)+c)^{\sigma_i}=\overline a_r(0,\ldots,0)x_m(3)^r
$$
 in $k[x_m(3)]$.  Set 
 $$
 g(u)=\sum_{i=1}^t\overline a_{\sigma_i}(0,\ldots,0)u^{\sigma_i}=\overline a_r(0,\ldots,0)(u-c)^r.
 $$
The binomial theorem tells us (since we are in characteristic zero) that 
\begin{equation}\label{eq10}
\sigma_{t-1}=r-1
\end{equation}
 so $\nu^*(x_m(1))=\nu^*(a_{r-1})$. There thus exists $\omega\in k=V_{\nu^*}/m_{\nu^*}$ such that 
$$
\nu^*(x_m(1)-\omega a_{r-1})>\nu^*(x_m(1)).
$$
We now make a change of variables in $T_1$, replacing $x_m(1)$ with $x_m(1)'=x_m(1)-\omega a_{r-1}$. After a finite number of iterations of the algorithm from (\ref{A22}), we must obtain a reduction $r_1<r$, since otherwise we construct an infinite increasing sequence of values in  $R_1$ which is bounded above by $\nu^*\left(\frac{\partial f}{\partial x_m(1)}\right)$, which is impossible since $R_1$ is Noetherian,

\section{Reduction of multiplicity under a defectless projection}


\begin{Theorem}\label{Theorem4} Let assumptions be as in Section \ref{Alg} and suppose that embedded local uniformization (Definition \ref{DefELU}) is true in dimension $m-1$,  
$$
r=\mbox{ord }f(0,\ldots,x_m)>1
$$
 and the defect $\delta(\nu^*/\nu)=0$.
Then there exists a birational transform along $\nu^*$,
$T\rightarrow T'=k[x_1',\ldots,x_m']$, such that $(x_1',\ldots,x_m')$ is the center of $\nu^*$ on $T'$, and if $f'$ is the strict transform of $f$ in $T'$, so that $S'=T'/(f')$ is a birational extension $S\rightarrow S'$ along $\nu^*$, we have that $\mbox{ord }f'(0,\ldots,0,x_m')<r$.
\end{Theorem}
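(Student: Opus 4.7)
The plan is to reduce Theorem~\ref{Theorem4} to Theorem~\ref{Theorem3}, whose hypothesis requires $\nu^*(z)\notin\Phi_{\nu}$. If $\nu^*(z)\notin\Phi_{\nu}$, Theorem~\ref{Theorem3} applies directly, so I assume $\nu^*(z)\in\Phi_{\nu}$; the task is then to construct a birational transform along $\nu^*$ after which the new last coordinate has value outside $\Phi_{\nu}$, and conclude by Theorem~\ref{Theorem3}.

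To locate a good approximation of $z$ from $K$, I first reduce to the case when $\nu^*$ is the unique extension of $\nu$ to $K^*$, using Lemma~\ref{Lemma8*} and the identity (\ref{eqspdefect}) to pass to a suitable splitting-field setting. Under this reduction, (\ref{Ost}) combined with (\ref{jumpdefect}) yields $S^{\rm tot}(\mathcal A)=p^{\delta(\nu^*/\nu)}=1$. Since $S^{(j-1)}(\mathcal A)>1$ for $2\le j\le N$ by (\ref{Jump1}), this forces $N=1$ in the MacLane--Vaqui\'e decomposition of $\nu^*$. By the contrapositive of Proposition~\ref{Prop1}, $\nu^*(z-K)$ attains a largest element $\gamma$, and Lemma~\ref{Lemma2} then gives $\gamma\notin\Phi_{\nu}$. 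I choose $h\in K$ with $\nu^*(z-h)=\gamma$; since $\nu^*(z)\in\Phi_{\nu}$ while $\gamma\notin\Phi_{\nu}$, necessarily $\gamma>\nu^*(z)$ and $\nu^*(h)=\nu^*(z)$.

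Next I would promote $h$ to a coordinate. Writing $h=h_1/h_2$ with $h_i\in R$, embedded local uniformization in dimension $m-1$ applied to $h_1$ and $h_2$, followed if necessary by Lemma~\ref{Lemma11} to arrange divisibility of the two resulting monomials, produces a birational extension $R\to R_1=k[x_1(1),\ldots,x_{m-1}(1)]$ in which $h=\mu u$ with $\mu$ a monomial in $x_1(1),\ldots,x_n(1)$ and $u\in (R_1)_{\mathfrak m_1}$ a unit of residue $c=u(0,\ldots,0)\in k^{\times}$. A Perron transform of type (\ref{A1}) built from $(\mu,c)$ realizes the substitution $x_m=\mu(x_m(2)+c)$, so $x_m(2)=z/\mu-c$. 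Using $z-c\mu=(z-h)+\mu(u-c)$ and the fact that $\gamma\notin\Phi_{\nu}$ while $\nu^*(\mu)+\nu(u-c)\in\Phi_{\nu}$ (so the two summands have distinct values), one obtains
\[
\nu^*(x_m(2))=\min\bigl(\gamma,\ \nu^*(\mu)+\nu(u-c)\bigr)-\nu^*(\mu).
\]

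If the minimum equals $\gamma$, then $\nu^*(x_m(2))=\gamma-\nu^*(\mu)\notin\Phi_{\nu}$ and Theorem~\ref{Theorem3} concludes. Otherwise $\nu^*(x_m(2))=\nu(u-c)\in\Phi_{\nu}$, and I iterate by applying ELU to $u-c\in R_1$ and performing another Perron transform of type (\ref{A1}) with the resulting monomial and residual constant, producing successively finer approximations to $h$. The main obstacle is to show this iteration terminates. I expect termination to follow from a Noetherian argument in the spirit of Zariski's characteristic-zero conclusion summarized after Theorem~\ref{Theorem3}: the strictly increasing sequence of values $\nu^*(x_m^{(k)})$ in $\Phi_{\nu}$ is bounded above by $\nu^*(\partial f/\partial x_m)$ (finite by separability of $K^*/K$), while the constraint $N=1$ from the defectless hypothesis precludes an infinite continuous family of augmented valuations converging to $\nu^*$. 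Hence after finitely many iterations we must fall into the first alternative above, at which point Theorem~\ref{Theorem3} delivers the required reduction $\mathrm{ord}\,f'(0,\ldots,0,x_m')<r$.
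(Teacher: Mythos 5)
Your strategy in the main case---arrange, after a change of the last coordinate, that its value lies outside $\Phi_{\nu}$ and then invoke Theorem \ref{Theorem3}---is indeed the paper's strategy, but two essential points are missing. First, the ``reduction to the case when $\nu^*$ is the unique extension of $\nu$'' is not available over $K$: uniqueness of the extension (hence (\ref{jumpdefect}), $N=1$, and the existence of a largest element of the distance set) holds for the splitting field extension $(K^*)^s/K^s$, not for $K^*/K$, and Lemma \ref{Lemma8*} only stabilizes splitting groups over birational models of $R$; it does not let you replace $K$ by $K^s$, which is a nontrivial algebraic extension of $K$ over which ELU is not directly applicable. Consequently the element $h$ with $(\nu^*)^s(z-h)=\gamma$ lies a priori only in $K^s$, so your next step---writing $h=h_1/h_2$ with $h_i\in R$ and monomializing by ELU---has no meaning. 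This is precisely where the paper works: using Lemma \ref{Lemma4*}, Lemma \ref{Lemma8*} and ELU it arranges that the local ring $U_1$ of $K^s$ lying over $R_1$ is unramified with residue field $k$, so $\hat U_1=\hat R_1$ and $h\in\hat R_1$; then, since $\nu$ has rank $1$, one truncates $h$ to $h'\in R_1$ with $\nu^*(h-h')>\gamma$, getting $\nu^*(z-h')=\gamma\notin\Phi_{\nu}$ in a single step. This also removes the need for your iteration, whose termination is not established: the appeal to $N=1$ does not preclude an infinite strictly increasing sequence of values below the fixed bound $\gamma$ (the distance set can be infinite below its maximum), the bound $\nu^*(\partial f/\partial x_m)$ is not justified, and the Noetherian argument does not transfer because the rings $R_1,R_2,\ldots$ change at every step.

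Second, you never treat the case $(K^*)^s=K^s$, i.e.\ $e(\nu^*/\nu)=1$, in which $z$ lies in the splitting field. There the largest-element/distance machinery (Proposition \ref{Prop1}, Lemma \ref{Lemma2}) gives nothing, since no nontrivial primitive extension remains, and the paper requires a separate argument: after arranging $z\in U_1$ with $\hat U_1=\hat R_1$, factor $f(x)=(x-z)f_1(x)$ in $U_1[x]$, use ELU and Perron transforms of types (\ref{A6}) and (\ref{A1}) to monomialize $z$ and the norms of the coefficients of $f_1$, and conclude directly that the strict transform of $f$ has order $1$ in the last variable. Without this case, and without the descent of $h$ from $K^s$ to $K$ described above, the proposal does not prove the theorem.
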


\begin{proof} Let $L$ be a Galois closure of $K^*$ over $K$ and $\tilde\nu$ be an extension of $\nu^*$ to $L$. With notation as in formula (\ref{eqspdefect}), we have that 
$\delta((\nu^*)^s/\nu^s)=\delta(\nu^*/\nu)=0$. Further, $(\nu^*)^s$ is the unique extension of $\nu^s$ to $(K^*)^s$ by Proposition 1.46 \cite{RTM}. We have that $\Phi_{\nu^s}=\Phi_{\nu}$, $\Phi_{(\nu^*)^s}=\Phi_{\nu^*}$  by Theorem 23, page 71 \cite{ZS2} and 
$V_{\nu^s}/m_{\nu^s}=V_{\nu}/m_{\nu}=k$, $V_{(\nu^*)^s}/m_{(\nu^*)^s}=V_{\nu^*}/m_{\nu^*}=k$ by Theorem 22, page 70 \cite{ZS2}, so by(\ref{Def}) or  (\ref{Ost}),
$$
[(K^*)^s:K^s]=e((\nu^*)^s/\nu^s)=e(\nu^*/\nu).
$$
First suppose that $(K^*)^s\ne K^s$. Recall that  $z$ is the class of $x_m$ in $S\subset K^*$.
Since $(K^*)^s=K^*K^s=K^s(z)$, 
 we have that $(\nu^*)^s(z-K^s)$ has a largest element $\gamma\in \Phi_{(\nu^*)^s}=\Phi_{\nu^*}$ by (\ref{Jump1}), (\ref{jumpdefect})  and Proposition \ref{Prop1}. Further, 
 \begin{equation}\label{A12} 
 \gamma\not\in \Phi_{\nu^s}=\Phi_{\nu}
 \end{equation}
  by Lemma \ref{Lemma2}. Let $h\in K^s$ be such that $(\nu^*)^s(z-h)=\gamma$. We have that $\gamma\ge \nu^*(z)>0$ and $\nu^s(h)\ge \nu^*(z)>0$. 
 
 Let $U$ be the localization of the integral closure of $R$ in $K^s$ at the center of $\nu^s$. There exists a birational extension $U\rightarrow U_0$ where $U_0$ is a normal local ring which is dominated by $\nu^s$ such that $h\in U_0$ (just take $U_0$ to be the localization of the integral closure of $U[h]$ at the center of $\nu^s$). By Lemma \ref{Lemma8*} and Lemma \ref{Lemma4*} and by embedded local uniformization, there exists a birational extension $R\rightarrow R_1=k[x_1(1),\ldots,x_{m-1}(1)]$ along $\nu$ such that the center of $\nu$ on $R_1$ is $\mathfrak m_1=(x_1(1),\ldots,x_{m-1}(1))$ and 
 \begin{equation}\label{A11}
 G^s(W_1/(R_1)_{\mathfrak m_1})=G^s(V_{\tilde\nu}/V_{\nu})
 \end{equation}
 where $W_1$ is the localization of the integral closure of $R_1$ in $L$ at the center of $\tilde\nu$ and $h\in U_1$ where $U_1$ is the localization of the integral closure of $R_1$ in $K^s$ at the center of $\nu^s$. By (\ref{A11}) and Theorem 1.47 \cite{RTM}, we have that $\mathfrak m_1 U_1$ is the maximal ideal of $U_1$ and the residue field of $U_1$ is $k=R_1/\mathfrak m_1$, so $U_1$ is a regular local ring with completion $\hat U_1=\hat R_1$, where the completion of $R_1$ is at the maximal ideal $\mathfrak m_1$. Thus $h\in \hat R_1$.

Now $\gamma<\nu((\mathfrak m_1 \hat R_1)^t)$ for some $t\in \ZZ_+$ since $\nu$ has rank 1, and thus there exists $h'\in R_1$ such that $h-h'\in (\mathfrak m_1\hat R_1)^t$. Thus
$$
\nu^*(z-h')=(\nu^*)^s((z-h)+(h-h'))=\nu^*(z-h)=\gamma.
$$
We have that $\gamma$ is the largest element of $\nu^*(z-K)$ since $\nu^*(z-K)\subset (\nu^*)^s(z-K^s)$. Thus $\nu^*(z-h')=\gamma\not\in \Phi_{\nu}$ by (\ref{A12}).

Let $x_m(1)=x_m$ and $T_1=k[x_1(1),\ldots,x_m(1)]$ so that the center of $\tilde\nu$ on $T_1$ is 
$$
(x_1(1),\ldots,x_m(1)).
$$
Now $R_1[z]\cong T_1/(f)$, and regarding $f$ as a polynomial in $T_1$, we have that  
$$
\mbox{ord }f(0,\ldots,0,x_m(1))=r.
$$
 Making  the change of variable in $T_1$ replacing $x_m(1)$ with $x_m(1)-h'$ and $z$ with $z-h'$, we have that $\nu^*(z)\not\in \Phi_{\nu}$. Then the reduction of $r$ in the conclusions of the theorem follows from Theorem \ref{Theorem3}.

Now suppose that $K^s=(K^*)^s$. Then $z\in K^s$. As in the above case, there exists a birational transformation $R\rightarrow R_1$ along $\nu$ such that $R_1=k[x_1(1),\ldots,x_{m-1}(1)]$, the center of $\nu$ on $R_1$ is $m_{R_1}=(x_1(1),\ldots,x_{m-1}(1))$  and
$z\in U_1$, where $U_1$ is the normal local ring of $K^s$ which is dominated by $\nu^s$, and  $U_1$ is a regular local ring with $\hat U_1=\hat R_1$ where the completion of $R_1$ is with respect to the maximal ideal $\mathfrak m_1=(x_1(1),\ldots,x_{m-1}(1))$ of $R_1$.
Letting $x=x_m$ and writing $f(x)=f(x_1(1),\ldots,x_{m-1}(1),x)\in U_1[x]$, we have a factorization $f(x)=(x-z)f_1(x)$ with $f_1(x)\in U_1[x]$.  Since $K^*$ is separable over $K$, we  have that $f_1(z)\in K^s$ is nonzero, and has nonnegative  value. Let $\tau=\nu^s(f_1(z))\ge 0$.
Since $\Phi_{\nu^*}=\Phi_{\nu}$ (by Theorem 23, page 71 \cite{ZS2}) and $\nu$ has rank 1,  we may
replace $x$ with $x-h$ and $z$ with $z-h$ for suitable $h\in R_1$ so that $\nu^s(z)>\tau$. Then writing 
\begin{equation}\label{A10}
f_1(x)=a_dx^d+a_{d-1}x^{d-1}+\cdots+a_0
\end{equation}
with $a_i\in U_1$ and $d\in \NN$, we have that $\nu^s(z^ia_i)>\tau$ for $i>0$ and  $\nu^s(a_0)=\tau$. 
Let $G=G(L/K)$ be the Galois group of $L$ over $K$ and for $0\le i\le d-1$ such that $a_i\ne 0$, let $b_i=\prod_{\sigma\in G}\sigma(a_i)\in R_1$ and let $c=\prod_{\sigma\in G}\sigma(z)\in R_1$.

 By embedded local uniformization  in $R_1$, applied to $c\prod b_i$, there is a birational extension $R_1\rightarrow R_2$ along $\nu$ such that $R_2=k[x_1(2),\ldots,x_{m-1}(2)]$, the center of $\nu$ on $R_2$ is $\mathfrak m_2=(x_1(2),\ldots,x_{m-1}(2))$ and  
each $b_i$ and $c$ is a monomial in $x_1(2),\ldots,x_n(2)$ (where $\nu(x_1(2)),\ldots \nu(x_n(2))$ is a rational basis of $\Phi_{\nu}\otimes \QQ$) times a unit in 
$(R_2)_{\mathfrak m_2}$. Let
 $U_2$ be the normal local ring of $K^s$ which is dominated by $\nu^s$ and lies over $(R_2)_{\mathfrak m_2}$. Then $U_2$ is a regular local ring with $\hat U_2=\hat R_2$ (by Lemma \ref{Lemma8*}). In particular, $x_1(2),\ldots,x_{m-1}(2)$ are regular parameters in $U_2$ and each nonzero 
 \begin{equation}\label{eqB1}
 a_i=x_1(2)^{c_1(i)}\cdots x_n(2)^{c_n(i)}\phi_i
 \end{equation}
 is a monomial in $x_1(1),\ldots, x_n(2)$ times a unit $\phi_i$ in $U_2$ and $z=x_1(2)^{b_1}\cdots x_n(2)^{b_n}\lambda$ is a monomial in $x_1(2),\ldots,x_n(2)$ times a unit $\lambda$ in $U_2$.

Let  $T_2=k[x_1(2),\ldots,x_{m-1}(2),x_m]$, a birational extension of $T$, such that the center of $\nu^*$ on $T_2$ is $(x_1(2),\ldots,x_{m-1}(2),x_m)$. Now perform the Perron transform $T_2\rightarrow T_3$ along $\nu^*$ defined by $x_i(2)=x_i(3)$ for $1\le i\le m-1$ and 
\begin{equation}\label{A9}
x_m=x_1(3)^{b_1}\cdots x_n(3)^{b_n}(x_m(3)+\beta)
\end{equation}
where $\beta$ is the residue of $\lambda$ in $V_{\nu^s}/m_{\nu^s}\cong k$. Thus
$$
x_m-z=x_1(3)^{b_1}\cdots x_n(3)^{b_n}(x_m(3)-(\lambda-\beta))
$$
in $U_2[x_m(3)]$. 
After substituting $x=x_m$, (\ref{eqB1}) and (\ref{A9}) into (\ref{A10}) and possibly performing a Perron transform $R_2\rightarrow R_4=k[x_1(4),\ldots,x_{m-1}(4)]$ along $\nu$ of type (\ref{A6}), and letting $U_4$ be the normal local ring of $K^s$ which lies over $(R_4)_{\mathfrak m_4}$ and is dominated by $\nu^s$, where  $\mathfrak m_4=(x_1(4),\ldots,x_{m-1}(4))$,
so that $U_4$ is a regular local ring with $\hat U_4=\hat R_4$ where the completion of $R_4$ is with respect to the maximal ideal $\mathfrak m_4$ of $R_4$ (by Lemma \ref{Lemma8*}), we have that the strict transform $\overline f_1$ of $f_1$ in $U_4[x_m(3)]$ is $\overline f_1= e_0+e_1 x_m(3)+\cdots+e_dx_m(3)^d$
with $e_i\in U_4$ for $1\le i$ and $e_0$ is a unit in $U_4$. Letting $\overline f$ be the strict transform of $f(x_m)$ in $T_4=R_4[x_m(3)]$, we have that $\overline f=(x_m(3)-(\lambda-\beta))\overline f_1$ in $U_4[x_m(3)]$, so that $\mbox{ord }\overline f(0,\ldots,0,x_m(3))=1$, since $\lambda-\beta\in \mathfrak m_4U_4$.

\end{proof}

\end{document}